\documentclass[12pt,leqno]{amsart}
\usepackage[dvips]{graphics}
\usepackage{amssymb}
\usepackage[breaklinks]{hyperref}
\hypersetup{
	colorlinks,
	linkcolor=black,
	citecolor=black
}

\setlength{\footskip}{30pt} 


\numberwithin{equation}{section}

\newtheorem{thm}{Theorem}[section]

\newtheorem{lem}[thm]{Lemma}
\newtheorem{cor}[thm]{Corollary}
\newtheorem{prop}[thm]{Proposition}

 \theoremstyle{definition}

\theoremstyle{remark}
\newtheorem{rem}{Remark}[section]

\newcommand{\tref}[1]{Theorem~\ref{#1}}
\newcommand{\cref}[1]{Corollary~\ref{#1}}

\newcommand{\R}{\mathbb{R}}

\newcommand{\red}[1]{{\color{red}}}

\pagestyle{plain}

\begin{document}
\pagebreak


\title{Flat subsets of Euclidean buildings}

\author{Raphael Appenzeller,  Auguste H\'ebert, Alexander Lytchak}

\subjclass{53C20, 51F99}

\keywords{Spherical buildings, Euclidean buildings}
	\subjclass
	[2020]{20E42, 51E24, 53C20}
\date{\today}  

\begin{abstract}
We prove that any convex flat subset in a complete Euclidean building is contained in an apartment of the maximal system of apartments.
\end{abstract}

\maketitle
\renewcommand{\theequation}{\arabic{section}.\arabic{equation}}
\pagenumbering{arabic}



\section{Introduction}
Euclidean buildings were introduced by  Jacques Tits in \cite{Tits} and play an important role in different areas of mathematics.  Their metric structure played a central role in super-rigidity results starting with  \cite{KleinerLeeb1}, \cite{GS}.  We refer to \cite{Linus}, \cite{KLM} for further investigations of their metric properties and to  \cite{BM} and the literature therein for super-rigidity questions related to Euclidean buildings.

The following result, possibly known to Bruce Kleiner and Bernhard Leeb, and  verified  in \cite{KleinerLeeb1} in a special case, has been discussed in other special cases in several independent works. In  this note we provide a proof of this result:

\begin{thm} \label{thm-main}
	Let $X$ be a Euclidean  building.  Let $C\subset X$ be a convex subset of $X$
	which is isometric to a convex subset of some Euclidean space.  Then $C$ is contained in an apartment $A$  of $X$.

	Moreover, if for some $x\in C$ we choose an apartment  $A^x$  in the infinitesimal Euclidean building  $T_xX$ with $T_xC\subset A^x$ then $A$ can be chosen so that $T _xA=A^x$. 
\end{thm}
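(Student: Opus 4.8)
The plan is to produce the apartment $A$ as the image of a development (exponential-type) map $\iota\colon V\to X$ defined on the model apartment $V:=A^x\cong\mathbb{E}^n$, where $n$ is the rank of $X$; I build $\iota$ out of geodesics of $X$, show it is an isometric embedding, and identify its image with an apartment having the prescribed tangent. Replacing $C$ by its closure (still a convex flat; since $A^x$ is closed and $T_xC\subseteq A^x$ also $T_x\overline{C}\subseteq A^x$; and an apartment containing $\overline{C}$ contains $C$) I assume $C$ closed. I fix $x\in C$, write $0\in T_xX$ for the tip, and regard $V=A^x$ as a copy of $\mathbb{E}^n$ with origin $0$ containing the convex cone $T_xC$. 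The structural facts about complete Euclidean buildings that I will use are: $X$ is CAT(0) and geodesically complete; each $T_xX$ is a Euclidean building with apartments the maximal flats through $0$ and spherical building of directions $\Sigma_xX$; $X$ is locally conical, i.e.\ some ball $B(x,r)$ is isometric to $B(0,r)\subset T_xX$; and, in the maximal apartment system, apartments are exactly the maximal ($n$-dimensional) flats. With these, the theorem reduces to extending $C$ to an $n$-dimensional flat whose tangent at $x$ is $A^x$.

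\textbf{Local step.} Using local conicality I transport the apartment $A^x$ of $T_xX$ to a flat germ in $X$: for small $r$ this gives an isometric embedding $\iota_r\colon B_V(0,r)\to X$ with $\iota_r(0)=x$ and $D_x\iota_r=\mathrm{id}_V$ whose image is a convex flat piece. Because every geodesic of $C$ issuing from $x$ has direction in $T_xC\subseteq A^x$ and $C$ is flat, the germ of $C$ at $x$ is carried by $\iota_r$; concretely $\iota_r$ agrees on $T_xC\cap B_V(0,r)$ with the polar parametrization $t\xi\mapsto$ (the point of $C$ at distance $t$ from $x$ in direction $\xi$). Thus near $x$ I already have the apartment germ containing the germ of $C$ and realizing the tangent $A^x$.

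\textbf{Global step.} I extend $\iota_r$ to all of $V$ by geodesic continuation together with a connectedness argument. For $R>0$ say that $\iota$ is \emph{admissible on} $B_V(0,R)$ if it extends $\iota_r$ to an isometric embedding of $B_V(0,R)$ with flat image containing $C\cap B(x,R)$; let $R^*$ be the supremum of such $R$. Openness of the admissible radii follows by applying the local step at each boundary point $y=\iota(v)$ with $|v|=R<R^*$: the already-built flat determines a tangent apartment at $y$ (the parallel transport of $A^x$, well defined since the piece is flat), which I realize locally and glue on, pushing past radius $R$. Closedness uses completeness of $X$ together with the a priori bound $d(\iota(v),\iota(w))\le|v-w|$ from CAT(0) comparison, so the developing map extends continuously to the boundary sphere and stays an isometric embedding there. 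Connectedness of $[0,\infty)$ then gives $R^*=\infty$ and an isometric embedding $\iota\colon V\to X$.

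\textbf{Conclusion and main obstacle.} The image $F:=\iota(V)$ is an $n$-dimensional flat, hence an apartment $A$ of the maximal system; by construction $T_xA=D_x\iota(V)=A^x$, and $\iota|_{T_xC}$ recovers $C$, so $C\subseteq A$, proving both assertions (the ``moreover'' clause is automatic since $A^x$ was fixed at the outset and preserved along the development). I expect the entire difficulty to sit in the global step, and specifically in the consistency of the geodesic continuations: geodesic extension in a building branches at wall-crossings, so the openness step must choose continuations that keep the developing image \emph{flat}. The mechanism I would rely on is that a flat is totally geodesic and two-sidedly rigid, so no genuine branching can occur inside it; quantitatively this is controlled by the spherical buildings $\Sigma_xX$ along the flat, which govern the angles and force the inequality $d(\iota(v),\iota(w))\le|v-w|$ to be an equality throughout. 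Making this rigidity precise --- that the developed flat cannot bifurcate and that the local pieces glue to a single globally isometric $\mathbb{E}^n$ --- is the crux of the proof.
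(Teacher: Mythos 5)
Your proposal has a genuine gap, and it is exactly the one you flag at the end: the openness step of the continuity argument. You assert that at a boundary point $y=\iota(v)$ the already-built flat ``determines a tangent apartment at $y$ (the parallel transport of $A^x$, well defined since the piece is flat)'' and that no genuine branching can occur because flats are rigid. Both claims are false in a Euclidean building. What the built flat determines at $y$ is only a flat \emph{half-space} in $T_yX$ (the inward directions); in general there are many apartments of $T_yX$ containing that half-space, and correspondingly many inequivalent ways to continue the flat past $y$ --- think of a flat half-plane bounded by a singular line in $T\times\mathbb{R}$, with $T$ a tree, which extends to a flat plane in as many ways as there are branches at a vertex. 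So the continuation at each boundary point is a genuine choice, and nothing in your argument makes the choices at different boundary points cohere into a single flat of radius larger than $R$; this coherence problem is the entire content of the theorem, not a rigidity fact one can quote. A secondary error: your local step rests on ``local conicality'' ($B(x,r)$ isometric to a ball in $T_xX$ for small $r$), which holds for discrete buildings but fails for general non-discrete complete Euclidean buildings, where branching can accumulate at $x$; and even granting a flat germ through $x$ tangent to $A^x$, such a germ need not contain a neighborhood of $x$ in $C$ --- equality of tangent cones does not control $C$ at any positive scale.

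For comparison, the paper resolves the coherence problem by never continuing a flat point-by-point along a sphere, but facet-by-facet along polyhedra. It first proves a spherical statement (\pref{prop: main}, refining the Kleiner--Leeb apartment theorem to surjective $1$-Lipschitz maps), which, applied to the building at infinity, shows that every flat convex \emph{cone} lies in an apartment with prescribed tangent apartment at its tip (\cref{cor: cone}). Then it runs a double induction, on $\dim C$ and on the number of facets of a convex polyhedron $\mathcal P$: choosing an interior point $x$ of a facet $\mathcal F$, the inductive ``moreover'' clause in dimension $k-1$ yields an apartment containing $\mathcal F$ whose tangent at $x$ is a prescribed apartment containing $T_x\mathcal P$, and the CAT(0) gluing lemma (\lref{lem: final}) shows that the union of $\mathcal P$ with the extension beyond $\mathcal F$ taken inside that apartment is again a flat convex polyhedron, now with one facet fewer. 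Finally, a general convex set is exhausted by polyhedra, using that intersections of apartments are Weyl polyhedra with a uniformly bounded number of facets, plus completeness. Note that the prescribed-tangent statement is not ``automatic'' as you suggest: in the paper it is precisely the device, carried through the induction, that makes a coherent choice of continuation across an entire facet possible --- i.e., it is the cure for the very branching problem your development map runs into.
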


 	All Euclidean buildings in this note, in particular, in Theorem \ref{thm-main}, are assumed to be complete, cf. Remark \ref{rem: non-complete} below for a comment about possible generalizations.

The analogue of the first part of this result  for spherical buildings is contained in   \cite{KleinerLeeb1}, see Theorem \ref{thm-klsph} below. 
In some special cases,  Theorem \ref{thm-main} is contained in \cite{KleinerLeeb1} as well.

A different proof of the spherical theorem  was obtained later in \cite[Proposition 1.3]{BL} in a slightly more general setting.  Moreover, this approach also covered the case of Theorem \ref{thm-main} for discrete Euclidean buildings, \cite[Remark 1.4]{BL}. 

For some convex subsets in discrete Euclidean buildings,  Theorem \ref{thm-main} is presented in \cite[Theorem 11.53]{Abram}. The case of convex triangles $C$ in Bruhat--Tits buildings appears in \cite[Exercise 2.3.10.3]{Rousseau}.  Several  special cases of  Theorem \ref{thm-main} have been obtained and used in \cite{BM}.

 \begin{rem} \label{rem: non-complete}
	Here is the only place in this paper, where we discuss non-complete buildings, we refer to  \cite{Parreau}, \cite{Str} for their discussion.

	In any non-complete Euclidean building, Theorem \ref{thm-main} cannot hold as stated. Indeed, any such building $X$ contains a (half-open) geodesic $\gamma:[0,1) \to X$, whose image $C$ is closed as a subset of $X$.  Such $C$ cannot be contained in an apartment. 
	However, under the additional assumption that $C$ is itself complete, we hope to discuss conditions for when $C$ is contained in an apartment in a forthcoming work.  
\end{rem}

  In Section \ref{sec: sph} we follow \cite{KleinerLeeb1}, in order to  reprove and  to extend the spherical version of our main theorem.   One major difference between the spherical and the Euclidean case is  the  finiteness of the simplicial decomposition of spherical apartments, which allows for a choice of a potential enlargement $A$  of the convex subset $C$.  In Section 4,  we prove Theorem \ref{thm-main} by enlarging the convex set step by step.  In order to make this enlargement procedure  successful, we deal with the case of convex polyhedra $C$ first, 
  and then deduce the result for  general convex subsets  by an appropriate approximation.

{\bf Acknowledgments.} Alexander Lytchak is grateful to the authors of \cite{BM} for their interest in this topic, especially  to Christine Breiner, for several related questions, which gave rise to this project.    Raphael Appenzeller would like to thank Isobel Davies for discussions. Raphael Appenzeller and Auguste H\'ebert acknowledge support from the Procope project
``Buildings, galleries and beyond'' and would like to thank its members, St\'ephane Gaussent, Zijun Li, Bianca Marchionna, Paul Philippe and Petra Schwer for helpful discussions. 
Raphael Appenzeller was partially supported by RTG 2229 ``Asymptotic Invariants and Limits of Groups and Spaces'' funded by Deutsche Forschungsgemeinschaft (DFG, German Research Foundation).

\section{Notation}
\label{subsec-notations}
We assume the reader to be familiar with the theory of Euclidean buildings and  CAT($\kappa$) spaces.
We are going to borrow the terminology  from  \cite{KleinerLeeb1}.   We consider spherical buildings with their canonical CAT(1) metric and Euclidean buildings with their canonical CAT(0) metric.  We assume that all buildings are complete.

Spherical and Euclidean  buildings involved are not necessarily thick nor irreducible. However, one can always consider a canonical reduction  as a spherical join of thick spherical buildings and $\mathbb S^0$'s, respectively, as a direct product of thick Euclidean buildings and $\R$'s, \cite[Sections 3.7, 4.9]{KleinerLeeb1}.  We always consider a maximal system of apartments, thus an apartment in an $n$-dimensional Euclidean  building is just an $n$-dimensional convex subset isometric to $\R^n$, \cite[Corollary 4.6.2]{KleinerLeeb1}.

For a Euclidean building $X$ we denote by $\partial _{\infty } X$ its spherical building at infinity.  For $x\in X$ we denote by $\Sigma _x X$ the space of directions at $x$, an $(n-1)$-dimensional spherical building. By $T_xX$ we denote the tangent space of $X$ at $x$, thus the Euclidean cone 
$\text{Cone}(\Sigma _x X)$.

In a spherical building  $B$ we denote a subset $C$ as convex if for any pair of points $x,y\in C$ with distance less than $\pi$, the set $C$ contains the unique geodesic in $B$ between $x$ and $y$.

For a Euclidean building $X$ and a point $x$ we have two canonical surjective $1$-Lipschitz maps, the \emph{logarithmic maps}  (as in any geodesically complete CAT(0) space)
 \cite[p. 124-128]{KleinerLeeb1},

$$\log_x:\partial _{\infty} X \to \Sigma _xX    \; \;  \text{and} \; \; \text{Log} _x :X\to T_xX\,.$$

\section{Spherical observations} \label{sec: sph}
For a Euclidean building $X$ the logarithmic map $\log_x :\partial _{\infty} X \to \Sigma _x X$ is surjective and $1$-Lipschitz  \cite[p. 128]{KleinerLeeb1}.
This, applied to the cone $X=\operatorname{Cone}(B)$ over a spherical building $B$, provides for any point $x\in B$ a surjective $1$-Lipschitz map $\tilde {\log} _ x$  from $B=\partial_{\infty} X$ to the spherical suspension  $\mathbb S^0 *\Sigma _xB =\Sigma _xX$ (cf. \cite[Section 2.6]{Lbuild}),

In a more general context such surjective $1$-Lipschitz maps are investigated 
in \cite{Lbuild}; here we only need the following observation relating such maps to  spreading morphisms
used in \cite{KleinerLeeb1}.

\begin{lem} \label{lem: slm}
A surjective $1$-Lipschitz map $f:B_1\to B_2$ between spherical buildings 
restricts to an isometry on each chamber

$\Delta \subset B_1$.   

\end{lem}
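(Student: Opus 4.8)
The plan is to exploit the interplay between surjectivity and the $1$-Lipschitz condition through the rigidity of geodesics of length $\pi$ in CAT(1) spaces, combined with the existence of antipodes in spherical buildings. I will use three standard structural facts (cf. \cite{KleinerLeeb1}): spherical buildings are CAT(1) of diameter $\le\pi$; every point has an \emph{antipode}, i.e.\ a point at distance exactly $\pi$; and two antipodal points are always contained in a common apartment $A\cong\mathbb{S}^{n-1}$. Fix a chamber $\Delta\subset B_1$. I will first prove $d(f\xi,f\eta)=d(\xi,\eta)$ whenever $\xi$ lies in the interior of $\Delta$ (so $\xi$ is a regular point) and $\eta\in\Delta$ is arbitrary, and then extend to the closed chamber by continuity.

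The first key step produces a good antipodal pair. Choose an antipode $\beta\in B_2$ of $f(\xi)$, so that $d(f\xi,\beta)=\pi$. By surjectivity there is $\omega\in B_1$ with $f(\omega)=\beta$, and since $f$ is $1$-Lipschitz,
\[
\pi=d(f\xi,f\omega)\le d(\xi,\omega)\le \diam(B_1)\le \pi .
\]
Hence $d(\xi,\omega)=\pi$, so $\omega$ is an antipode of $\xi$, and moreover $d(f\xi,f\omega)=\pi$. Let $A$ be a common apartment of the antipodal pair $\xi,\omega$. Since $\xi$ is an interior point of $\Delta$, the chamber of $A$ containing $\xi$ is necessarily $\Delta$, so $\Delta\subset A$; in $A\cong\mathbb{S}^{n-1}$ the point $\omega$ is the antipode $-\xi$ of $\xi$. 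In particular $\eta\in\Delta\subset A$ as well.

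The second step is geodesic rigidity. In $A\cong\mathbb{S}^{n-1}$ the great circle through $\xi$ and $\eta$ automatically passes through the antipode $\omega=-\xi$; let $\gamma\colon[0,\pi]\to A\subset B_1$ be the unit-speed half of this great circle with $\gamma(0)=\xi$, $\gamma(\pi)=\omega$, so that $\gamma(d(\xi,\eta))=\eta$. Then $f\circ\gamma$ is a $1$-Lipschitz curve on $[0,\pi]$ joining $f\xi$ to $f\omega$, which are at distance $\pi$; its length is therefore both $\le\pi$ and $\ge d(f\xi,f\omega)=\pi$, so $f\circ\gamma$ is a unit-speed minimizing geodesic and $d\bigl(f\gamma(s),f\gamma(t)\bigr)=|s-t|$ for all $s,t\in[0,\pi]$. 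Taking $s=0$ and $t=d(\xi,\eta)$ yields $d(f\xi,f\eta)=d(\xi,\eta)$.

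This establishes the equality for every interior $\xi$ and every $\eta\in\Delta$; since $f$ and $d$ are continuous and the interior of $\Delta$ is dense in $\Delta$, the equality passes to all $\xi,\eta\in\Delta$, so $f|_\Delta$ is an isometric embedding, which is the assertion. I expect the main obstacle to be structural rather than metric: for the possibly non-thick and reducible buildings considered here one must secure the existence of antipodes and, above all, the fact that an antipodal pair lies in a common apartment, so that the regular endpoint $\xi$ drags its entire chamber $\Delta$ into $A$. Once these standard building-theoretic inputs are in place, the metric core—the length-$\pi$ geodesic rigidity forced by surjectivity together with the $1$-Lipschitz bound—is immediate.
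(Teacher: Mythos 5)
Your proof is correct and takes essentially the same route as the paper: produce an antipode of $f(\xi)$ in $B_2$, lift it by surjectivity, use $\mathrm{diam}(B_1)\le\pi$ plus the $1$-Lipschitz bound to force $d(\xi,\omega)=\pi$, and then apply rigidity of $1$-Lipschitz images of length-$\pi$ geodesics chosen to pass through the second point of the chamber. The only difference is bookkeeping: the paper asserts directly that for any $x_1,x_2\in\Delta$ there is a geodesic from $x_1$ to its antipode through $x_2$, whereas you justify this step via a common apartment, working first with regular $\xi$ and then extending to the closed chamber by continuity.
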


\begin{proof}
Fix any $x_1,x_2\in \Delta$.  Consider $y_1=f(x_1)$, an arbitrary point $\bar y_1\in B_2$ with $d(y_1,\bar y_1)=\pi$ 
and a point $\bar x_1 \in B_1$ in the preimage of $\bar y_1$.  

The diameter of $B_1$ is $\pi$ and  $f$ is $1$-Lipschitz. Therefore,
$d(x_1,\bar x_1)=\pi$ and $f$ restricts to an isometry on any geodesic from $x_1$ to $\bar x_1$.

Since $x_1,x_2 \in \Delta$, there exists a geodesic from $x_1$ to $\bar x_1$ passing through $x_2$.  Thus, $f$ preserves the distance between $x_1$ and $x_2$.
\end{proof}

If $\dim (B_1)=\dim (B_2)$, a  map $f$ satisfying the  conclusion of the above Lemma is called in \cite[Section 3.11]{KleinerLeeb1} a \emph{spreading morphism}.

Let $f:B_1\to B_2$ be a surjective $1$-Lipschitz map between spherical $n$-dimensional buildings.  Since the restriction of $f$ to each chamber is an isometry, for any $x\in B_1$, any sufficiently short geodesic starting at $x$ is mapped isometrically onto its image starting at $y:=f(x)$.  This defines a differential $\Sigma _x f:\Sigma _xB_1 \to \Sigma _yB_2$.  This  differential is $1$-Lipschitz by definition and it is surjective due to \cite[Lemma 3.11.1]{KleinerLeeb1}.

We will call a metric space $C$ \emph{spherical} if it isometric to a subset of the unit sphere $\mathbb S^k$ of some dimension $k$.

The  following result is \cite[Proposition 3.9.1]{KleinerLeeb1}.

\begin{thm}[Kleiner--Leeb] \label{thm-klsph}
Let $B$ be a spherical building, let $C\subset B$ be a closed, convex, spherical subset. Then there exists an apartment $A\subset B$ which contains $C$.
\end{thm}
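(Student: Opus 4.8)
The plan is to argue by induction on the dimension $n=\dim B$, at each step replacing the ambient building by a space of lower dimension — either a link $\Sigma_x B$ or a join factor — of dimension $n-1$. The base case $n=0$ is immediate: a $0$-dimensional spherical building is a set of points pairwise at distance $\pi$, and any subset that is isometric to a subset of a sphere can only be a point or a pair of opposite points; either is contained in an apartment. For the inductive step I would distinguish the two cases $\diam(C)<\pi$ and $\diam(C)=\pi$, handling the first by passing to a link and the second by a join decomposition.

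Suppose first that $\diam(C)<\pi$, and fix any $x\in C$. The set $\Sigma_x C\subset\Sigma_x B$ of directions of $C$ at $x$ is a convex spherical subset of the $(n-1)$-dimensional spherical building $\Sigma_x B$: convexity is inherited from that of $C$, and since $C$ is isometric to a convex subset $\hat C\subset\mathbb S^k$, the directions $\Sigma_x C$ are isometric to the directions of $\hat C$ at the corresponding point, which form a convex subset of $\mathbb S^{k-1}$. Passing to the closure $\overline{\Sigma_x C}$, which is again closed, convex and spherical, and applying the induction hypothesis, I obtain an apartment $\bar A\cong\mathbb S^{n-1}$ of $\Sigma_x B$ with $\overline{\Sigma_x C}\subset\bar A$. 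Invoking the standard fact that the apartments of $\Sigma_x B$ are exactly the links $\Sigma_x A$ of apartments $A$ of $B$ containing $x$, I lift $\bar A$ to an apartment $A\subset B$ with $x\in A$ and $\Sigma_x A=\bar A$. Then for any $y\in C$ one has $d(x,y)<\pi$, so the geodesic $[x,y]$ is unique and its initial direction lies in $\Sigma_x C\subset\Sigma_x A$; since $A$ is convex and isometric to $\mathbb S^n$, the geodesic issuing from $x$ into $A$ in that direction is a geodesic of $B$ of the same length, and by uniqueness of geodesics of length $<\pi$ in a CAT($1$) space it coincides with $[x,y]$. Hence $y\in A$, giving $C\subset A$.

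If instead $\diam(C)=\pi$, choose opposite points $p,q\in C$. In the model $\hat C\subset\mathbb S^k$ every point lies on a great semicircle from $\hat p$ to $\hat q$, and convexity of $C$ forces the two half-geodesics of length $\pi/2$ through any such point to lie in $C$; thus $C$ is contained in the union $B_{pq}$ of all geodesics of length $\pi$ from $p$ to $q$. This union is isometric to the spherical join $\mathbb S^0*B'=\{p,q\}*B'$, where $B'$ is an $(n-1)$-dimensional spherical building, and the discussion above shows $C=\{p,q\}*C'$ with $C'=C\cap B'$ a closed, convex, spherical subset of $B'$. By the induction hypothesis $C'$ lies in an apartment $A'\cong\mathbb S^{n-1}$ of $B'$, and then $A=\{p,q\}*A'\cong\mathbb S^0*\mathbb S^{n-1}=\mathbb S^n$ is an apartment of $B$ containing $C$.

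The genuine obstacle is the case $\diam(C)=\pi$: the link argument collapses exactly at points of $C$ opposite to the base point $x$, since $x$ together with an apartment through it singles out only one opposite point, whereas $C$ may reach many. The join decomposition is precisely what circumvents this, and the two structural inputs I would need to justify carefully are that the union of diameters between two opposite points of a spherical building carries the metric structure of a join $\mathbb S^0*B'$ with $B'$ again a building, and that apartments lift along the link map $\Sigma_x$; both are available from the building theory of \cite{KleinerLeeb1}. The remainder of the argument reduces to convexity together with the uniqueness of short geodesics in CAT($1$) spaces.
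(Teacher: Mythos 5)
Your inductive strategy breaks down in the case $\diam(C)<\pi$, and unfortunately that is where the real content of the theorem lies. Having chosen an apartment $A\subset B$ with $x\in A$ and $\Sigma_xA=\bar A\supset\Sigma_xC$, you conclude that each $y\in C$ lies in $A$ because the geodesic of $A$ issuing from $x$ in the direction of $[x,y]$ coincides with $[x,y]$ ``by uniqueness of geodesics of length $<\pi$''. But uniqueness in a CAT(1) space is uniqueness of the geodesic between two given \emph{endpoints}; it says nothing about two geodesics sharing the same initial point and initial direction, and in a thick building such geodesics genuinely branch at walls. A minimal counterexample to your step: let $B$ be the incidence graph of the Fano plane with edges of length $\pi/3$ (a thick $1$-dimensional spherical building whose apartments are hexagons of length $2\pi$), and let $C=[x,y]$ be a geodesic consisting of two edges $x$--$u$--$y$. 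Then $\Sigma_xC$ is a single direction, and by thickness at $u$ there is an apartment $A$ containing the edge $xu$ which continues from $u$ along an edge $uw$ with $w\neq y$; since $A$ is convex and all its vertices other than $x,w$ have distance $\neq\pi/3$ from $u$, we get $\Sigma_xC\subset\Sigma_xA$ but $y\notin A$. So an inclusion of directions at a single point never forces $C\subset A$: the apartment has to be chosen compatibly with $C$ at every branch point, and this global issue is exactly what the theorem has to overcome. (By contrast, the case $\diam(C)=\pi$, which you single out as the genuine obstacle, is the benign one: your suspension argument there is essentially sound, granting the standard facts that the union of geodesics between antipodes $p,q$ is a closed convex suspension $\mathbb S^0 * B'$ with $B'\cong\Sigma_pB$.)

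This gap is precisely what the argument followed in the paper (Kleiner--Leeb \cite[Proposition 3.9.1]{KleinerLeeb1}, reproduced in extended form as \pref{prop: main}) is designed to circumvent. There one also inducts on dimension, but the apartment is not produced by one link lift at a single point: among all candidate apartments one takes $A_1$ meeting $C$ in the maximal number of open faces, and if some $q\in C$ in the open star of a point $p\in C\cap A_1$ were missed, the inductive hypothesis applied to $\Sigma_pB$ gives an apartment $K\supset\Sigma_pC$ of the link, and the improved apartment $A'$ is defined as the union of all geodesics \emph{from $p$ to its antipode $\bar p\in A_1$} with initial directions in $K$. The reason this lift is legitimate, while yours is not, is that geodesics between two antipodal points \emph{are} determined by their initial directions (no branching can occur along them); consequently $A'$ contains every geodesic from $p$ to $\bar p$ through a point of $C$, hence all of $A_1\cap C$, and in addition the point $q$, contradicting maximality. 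If you want to rescue your two-case scheme, the $\diam(C)<\pi$ case needs a device of this kind --- something that propagates control of $C$ past the branch points --- rather than a direction inclusion at one point.
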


 Using essentially the same proof, we obtain the following extension:

	\begin{prop} \label{prop: main}
		Let $f:B_1\to B_2$ be a surjective $1$-Lipschitz map between $n$-dimensional spherical buildings.  Let $C_1$ be a convex spherical 
		subset of $B_1$ which is mapped by $f$ isometrically onto its image $C_2 \subset B_2$.
		
		Assume that $A_2$ is an apartment in $B_2$ containing $C_2$.  Then there exists an apartment $A_1$ in $B_1$ containing $C_1$ such that 
		$f$ maps $A_1$ isometrically onto $A_2$.		
	\end{prop}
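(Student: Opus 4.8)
The plan is to induct on the common dimension $n$ and, inside the inductive step, to enlarge $C_1$ little by little while lifting the enlargement through $f$, mirroring the Kleiner--Leeb proof of \tref{thm-klsph}. The essential new tool is the differential $\Sigma_x f$ constructed just above the statement: it is a surjective $1$-Lipschitz map between $(n-1)$-dimensional spherical buildings, which is exactly the shape needed to feed the inductive hypothesis. For $n=0$ the claim reduces to finding, for the antipode of $f(q)$ in $A_2$, an antipodal preimage in $B_1$, which follows from surjectivity together with the $1$-Lipschitz property precisely as in the proof of \lref{lem: slm}.

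For the inductive step I would consider the family of \emph{admissible enlargements}: pairs $(C_1',C_2')$ of closed convex spherical sets with $C_1\subseteq C_1'\subseteq B_1$ and $C_2\subseteq C_2'\subseteq A_2$ such that $f$ maps $C_1'$ isometrically onto $C_2'$. This family is nonempty, since it contains $(C_1,C_2)$, and, ordered by inclusion in the second coordinate, it admits a maximal element: the upper bound of a chain is obtained by taking closures of unions, which remain closed, convex, spherical and isometrically matched because $f$ is continuous and isometric on each member. The finiteness of the chamber decomposition of $A_2$ is what lets us actually produce the enlargement below. Once maximality forces $C_2'=A_2$, the set $A_1:=C_1'$ is a top-dimensional unit sphere isometrically embedded in the $n$-dimensional building $B_1$, hence an apartment, and it maps isometrically onto $A_2$; this is exactly the desired conclusion. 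Everything therefore reduces to the following enlargement claim.

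The enlargement step is the heart of the argument. Suppose $C_2'\subsetneq A_2$. Since $C_2'$ is a proper closed convex subset of the round sphere $A_2\cong\mathbb{S}^n$, there are a point $p\in C_2'$ and a direction $v\in\Sigma_pA_2$ that immediately leaves $C_2'$; set $q:=(f|_{C_1'})^{-1}(p)$. Now apply the inductive hypothesis to $\Sigma_q f:\Sigma_qB_1\to\Sigma_pB_2$ (with $p=f(q)$): the convex spherical set $\Sigma_qC_1'$ is mapped isometrically onto $\Sigma_pC_2'$, which lies in the apartment $\Sigma_pA_2$ of $\Sigma_pB_2$. This produces an apartment $\tilde A_1\subseteq\Sigma_qB_1$ containing $\Sigma_qC_1'$ with $\Sigma_qf(\tilde A_1)=\Sigma_pA_2$ isometrically. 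Let $w\in\tilde A_1$ be the preimage of $v$; then the angle from $w$ to every direction of $\Sigma_qC_1'$ equals the corresponding angle from $v$ to $\Sigma_pC_2'$. Extending the geodesic $\eta$ issuing from $q$ in direction $w$ (which maps isometrically onto the $v$-geodesic $\gamma$ for short length, by \lref{lem: slm}) and taking $C_1''$ to be the lift of $C_2'':=\mathrm{hull}(C_2'\cup\gamma([0,\varepsilon]))$ yields a strictly larger admissible enlargement, contradicting maximality.

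I expect the main obstacle to be verifying that $C_1''$ really maps isometrically onto $C_2''$, i.e. that no folding occurs when the new segment is glued to $C_1'$. The matched-angle condition coming from $\tilde A_1$ pins down, via the spherical law of cosines, the comparison distances for $d(\eta(t),c)$ with $c\in C_1'$; the CAT(1) inequality bounds the true distances from above by these comparison values, while the $1$-Lipschitz property of $f$ bounds them from below by $d(\gamma(t),f(c))$, which equals the same comparison value because $C_2''$ sits in a round sphere. The two bounds coincide, forcing equality of all distances, and uniqueness of geodesics at distance $<\pi$ then upgrades this to the isometry $f|_{C_1''}\colon C_1''\to C_2''$ and to the convexity of $C_1''$. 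This comparison computation, combined with the inductive lifting of the angles through $\Sigma_q f$, is the mechanism that makes the enlargement, and hence the whole proof, go through.
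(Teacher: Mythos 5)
The skeleton of your argument (induction on $n$, Zorn's lemma over isometrically matched enlargements $(C_1',C_2')$, and the observation that a maximal enlargement with $C_2'=A_2$ yields the desired apartment) is coherent, but the enlargement step---which you rightly call the heart of the argument---contains a genuine error. You claim that ``the CAT(1) inequality bounds the true distances from above by these comparison values.'' It does not: in CAT($\kappa$) spaces, comparison via angles goes the other way. Since the Alexandrov angle at $q$ is bounded \emph{above} by the comparison angle, monotonicity of the spherical law of cosines yields $d(\eta(t),c)\geq$ (the law-of-cosines value with sides $t$, $d(q,c)$ and angle $\alpha$); upper bounds on distances in CAT(1) spaces come from comparison triangles with prescribed \emph{side lengths}, not from angles. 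The hyperbolic plane, which is CAT(0) and CAT(1), already shows that distances between two geodesics issuing from a point can strictly exceed the law-of-cosines value for the actual angle. So your two estimates---the angle comparison and the $1$-Lipschitz bound $d(\eta(t),c)\geq d(\gamma(t),f(c))$---are both \emph{lower} bounds, and no equality follows.

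Worse, the desired equality is genuinely false for an arbitrary choice of the extension $\eta$, because in a building a geodesic is not determined by its initial direction: it may branch each time it meets a wall. Already in the $2$-dimensional building $\mathbb{S}^0 * K_{3,3}$ (edge lengths $\pi/2$; a join of three rank-one buildings) one can choose $q$, $c\in C_1'$ and $w$ with $\angle(w,u_c)=\alpha\in(0,\pi)$ so that one continuation of $\eta$ past a branch point satisfies the spherical formula while another gives strictly larger distances to $c$. Making your step work would require choosing the continuation of $\eta$ at every branch point compatibly with \emph{all} of $C_1'$ simultaneously, which is essentially the statement being proved. (A further unaddressed issue: even with segment distances preserved, ``uniqueness of geodesics'' does not give that $f$ is isometric on the convex hull $C_1''$; distances between points on \emph{different} geodesics of the hull need the same missing upper bound.) This is exactly the difficulty the paper's proof, following Kleiner--Leeb, is built to avoid: it never extends by short segments and never takes hulls. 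Instead it only ever constructs entire apartments as unions of geodesics between two \emph{antipodal} points $p,\bar p$, with initial directions running over an apartment $K\subset\Sigma_pB_1$ supplied by the induction; between antipodes the geodesic with a given initial direction is unique, the resulting union is automatically an apartment mapped isometrically onto $A_2$ (suspension structure), and the finiteness of the face structure of an apartment replaces your Zorn's lemma---one maximizes over the family $\mathcal{A}$ the number of open faces meeting $C_1$, and the antipodal construction through a suitable $p\in C_1\cap A_1$ produces a strictly better apartment whenever $C_1\not\subset A_1$.
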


	\begin{proof}
	We use induction on $n$ as in   \cite[Proposition 3.9.1]{KleinerLeeb1}.

	 If $n=0$ then $A_2$ consists of two points $y, \bar y$ and $C_1$ is either an apartment,  a singleton  or the empty set. 	 The first  case is clear.  	If $C_1=\emptyset$, we  choose arbitrary preimages $p\in f^{-1} (y)$ and $\bar p  \in  f^{-1} (\bar y)$.  Then $\{p, \bar p\}$ is the required apartment $A_1$.  Similarly, if $C_1$ is a singleton $p\in f^{-1} (y)$, we choose an arbitrary $\bar p \in f^{-1} (\bar y)$. Then, again, $A_1:=\{p, \bar p\}$ is the required apartment.

	Assume now the result is true in dimension $n-1$.    Assume first that 
	$C_1$ is a pair of antipodal points $\{p, \bar p \}$.  Set $y=f(p)$ and $\bar y=f(\bar p)$.  We apply  the inductive assumption to the surjective $1$-Lipschitz differential $\Sigma_p f:\Sigma _p B_1\to \Sigma _y B_2$. Hence, we obtain an apartment $A^p\subset \Sigma _p B_1$, mapped by $\Sigma _p f$ isometrically to $\Sigma _y A_2$.  
	
	There is a unique  apartment $A$ in $B_1$, which contains $p,\bar p$  and has $A^p$
	as its space of directions $\Sigma _p A= A^p$.  Since the map $f$ restricts to an isometry on any geodesic between $p$ and $\bar p$ and since $\Sigma _p f$ restricts to an isometry from $\Sigma _p A$ to $\Sigma _y A_2$, we deduce that $f$ sends $A$ isometrically onto $A_2$, finishing the proof in the case $C_1=\{p, \bar p\}$.
	
	If $C_1=\emptyset$, we choose arbitrary antipodes $y, \bar y \in A_2$ and arbitrary preimages $p\in f^{-1} (y)$ and $\bar p\in f^{-1} (\bar y)$. We then apply the previously discussed case and find an apartment $A_1$ containing $p,\bar p$ and sent by $f$ isometrically onto $A_2$

Similarly, if  $C_1=\{p\}$ is a singleton, we set $y=f(p)\in A_2$ and consider the antipode $\bar y$ of $y$ in $A_2$. We then choose an arbitrary $\bar p \in f^{-1} (\bar y)$ and apply  the previously discussed case to find the required apartment $A_1$ containing $p$ and $\bar p$.

	Thus, we may assume that $C_1$ has more than two points. In particular, $C_1$ is connected in this case.

	Consider the set $\mathcal A$ of all apartments in $B_1$ which are mapped by $f$ isometrically onto $A_2$.   Applying the previously discussed case, we find some $A\in  \mathcal A$, which contains an arbitrarily fixed point $x\in C_1$.

	Among all $A\in \mathcal A$ consider an apartment  $A_1$ such that the number of its open faces (of all possible dimensions) 
	 intersecting $C_1$ is as large as possible in $\mathcal A$.

We claim that $C_1\subset A_1$.  Assuming otherwise, we find a point $p\in C_1 \cap A_1$ and some $q\in C_1\setminus A_1$ contained in the open star of $p$.

Consider the opposite point $\bar p$ of $p$ in $A_1$ and the union $V\subset A_1$ of all geodesics in $A_1$ between $p$ and $\bar p$, which contain at least one  point in $C_1 \setminus \{p, \bar p\}$.  Then $V$   is  a convex  subset of $A_1$ which contains $A_1\cap C_1$.

Set $y=f(p)$.
By the inductive assumption applied to the differential $\Sigma _p f:\Sigma _p B_1  \to \Sigma _y B_2$ we find an apartment 
$K\subset \Sigma _p B_1$, which is mapped isometrically by $\Sigma _p f$  onto $\Sigma _{y} A_2$ and contains $\Sigma _p C_1$.  Then we define the  apartment $A'\subset B_1$ to consist of all geodesics between $p$ and $\bar p$ starting in directions of $K$.

Apartment $A'$ contains $V$ but also a neighborhood of $p$ in $C_1$, hence the point $q$.    The map $f$ sends each geodesic between $p$ and $\bar p$ isometrically onto its image in $A_2$. Since the differential $\Sigma_pf:K\to \Sigma _y A_2$ is  an isometry, the restriction of $f$ to $A'$ is an isometry. The apartment $A'$ contains $V$ hence all of $A_1\cap C_1$ but also the point $q$.
  Thus, the open face of $A'$ containing 
$q$ does not intersect  $A_1$. This contradicts the maximality of $A_1$ and proves that $A_1$ contains all of $C_1$.	
	\end{proof}

As direct consequences we deduce a minor strengthening of Theorem \ref{thm-klsph}, analogous to the second part of our main Theorem \ref{thm-main}.

\begin{cor}
Let $C\subset B$ be a closed, convex, spherical subset in a spherical building $B$. Let $x\in C$ be arbitrary and let $A_x\subset \Sigma _x B$ be an apartment in the space of directions such that $\Sigma _xC\subset A_x$.  Then there exists an apartment $A\subset B$ which contains $C$ and such that $\Sigma _xA=A_x$.
\end{cor}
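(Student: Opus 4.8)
The plan is to deduce this corollary directly from Proposition~\ref{prop: main} by choosing the right surjective $1$-Lipschitz map. The natural candidate is the differential-type map that collapses $B$ onto the space of directions $\Sigma_x X$ of the Euclidean cone $X = \operatorname{Cone}(B)$. As observed at the start of Section~\ref{sec: sph}, the logarithmic map $\tilde{\log}_x$ gives a surjective $1$-Lipschitz map from $B = \partial_\infty X$ to $\Sigma_x X = \mathbb{S}^0 * \Sigma_x B$. This is not quite the map I want, since it lands in a suspension rather than in $B$ itself. Instead, the cleaner route is to apply Proposition~\ref{prop: main} with $B_1 = B_2 = B$ and $f = \mathrm{id}_B$: the identity is trivially a surjective $1$-Lipschitz map, and it carries $C$ isometrically onto itself. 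But then I need an apartment $A_2$ in the target already containing $C$, which is exactly what Theorem~\ref{thm-klsph} supplies, and I need to encode the prescribed space of directions $A_x$ in the source.

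So the argument proceeds in two steps. First I would apply Theorem~\ref{thm-klsph} to obtain \emph{some} apartment $A_2 \subset B$ containing $C$; its space of directions $\Sigma_x A_2$ is an apartment of $\Sigma_x B$ containing $\Sigma_x C$, but it need not equal the prescribed $A_x$. The issue, then, is purely to modify the apartment so that its space of directions at $x$ becomes $A_x$ while still containing $C$. The key observation is that $A_x$ and $\Sigma_x A_2$ are two apartments in the $(n-1)$-dimensional spherical building $\Sigma_x B$, both containing the convex spherical set $\Sigma_x C$, and I want an apartment $A$ of $B$ realizing $A_x$. This is precisely handled by the mechanism used inside the proof of Proposition~\ref{prop: main}: choosing an apartment of $\Sigma_x B$ (here $A_x$) that contains $\Sigma_x C$, one reconstructs an apartment of $B$ by taking all geodesics emanating from $x$ toward $\bar x$ (the antipode of $x$ in $A_2$) in the directions of $A_x$. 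The content is that this reconstructed set is genuinely an apartment containing $C$.

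Concretely, I would phrase it as an application of Proposition~\ref{prop: main} to the identity map $f = \mathrm{id}_B \colon B \to B$ with $C_1 = C_2 = C$, taking the target apartment to be $A_2$ from Theorem~\ref{thm-klsph}. Reading through the proof of the proposition, at the step where one passes to the differential $\Sigma_p f$, one has freedom in choosing the apartment $K \subset \Sigma_p B_1$ over the base point $p = x$: the inductive assumption only requires $K$ to contain $\Sigma_x C$ and to map isometrically onto $\Sigma_x A_2$, and since $f$ is the identity here, \emph{any} apartment of $\Sigma_x B$ containing $\Sigma_x C$ will do. Choosing $K = A_x$ then forces the resulting apartment $A$ to satisfy $\Sigma_x A = A_x$ and $C \subset A$, which is the claim. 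I would make this explicit by re-running the one-step reconstruction: let $\bar x$ be the antipode of $x$ in $A_2$, and let $A$ be the union of all geodesics from $x$ to $\bar x$ starting in directions belonging to $A_x$; the proposition's argument shows $A$ is an apartment, is isometric to $A_2$ via the identity (so $A = A_2$ as sets up to the replacement, and in particular is a genuine apartment), and contains $C$.

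The main obstacle I anticipate is bookkeeping rather than conceptual: verifying that the reconstructed $A$ actually contains all of $C$, not merely the directions $\Sigma_x C$. This is exactly the maximal-face argument in the last paragraph of the proof of Proposition~\ref{prop: main}, which shows that an apartment whose space of directions at $x$ captures $\Sigma_x C$ and which is built by the geodesic-reconstruction procedure necessarily swallows all of the connected convex set $C$. Thus the cleanest write-up simply invokes Proposition~\ref{prop: main} as a black box with $f = \mathrm{id}_B$ and with the prescribed apartment $A_x$ substituted for the auxiliary $K$, and notes that the conclusion $\Sigma_x A = A_x$ is read off from the construction; the only thing to check carefully is that the freedom in choosing $K$ in the inductive step is genuine, which it is because the differential of the identity is again the identity.
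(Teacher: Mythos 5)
Your proposed ``cleanest write-up'' has a genuine gap: Proposition~\ref{prop: main} cannot be used as a black box with $f=\mathrm{id}_B$. The only apartment mapped isometrically onto $A_2$ by the identity is $A_2$ itself, so the proposition then returns $A_1=A_2$ and gives no control over $\Sigma_x A_1$, which need not be $A_x$. You cannot simultaneously treat the proposition as a black box and ``substitute $A_x$ for the auxiliary $K$'' inside its proof; and in fact, when $f=\mathrm{id}$ the proof never reaches the step where $K$ is chosen: the family $\mathcal A$ of apartments mapped isometrically onto $A_2$ is the singleton $\{A_2\}$, the maximality argument terminates immediately because $C\subset A_2$, and no contradiction step occurs. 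For the same reason, your appeal to the ``maximal-face argument'' to establish $C\subset A$ for your reconstructed apartment is misplaced --- that argument compares members of $\mathcal A$ and says nothing about an apartment built by hand.

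What does survive is your explicit fallback construction, and it can be completed into a correct proof, genuinely different from the paper's. Take $A_2\supset C$ from Theorem~\ref{thm-klsph}, let $\bar x$ be the antipode of $x$ in $A_2$, and let $A$ be the union of all geodesics from $x$ to $\bar x$ with initial directions in $A_x$. By the antipodal reconstruction fact used inside the proof of Proposition~\ref{prop: main} (a pair of antipodes together with a prescribed apartment in the space of directions at one of them determines a unique apartment of $B$), $A$ is an apartment with $\Sigma_x A=A_x$. The containment $C\subset A$ then needs no maximality argument at all: for any $q\in C$, the geodesic of the round sphere $A_2$ from $x$ through $q$ extends to $\bar x$, is a geodesic of $B$, and its initial direction lies in $\Sigma_x C\subset A_x$, so $q\in A$ by the definition of $A$. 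Note finally that the paper's actual proof is exactly the route you dismissed in your first paragraph: it applies Proposition~\ref{prop: main} to the suspension map $\tilde{\log}_x\colon B\to \mathbb S^0 * \Sigma_x B$ with $C_1=C$ and with target apartment the suspension $\mathbb S^0 * A_x$. Landing in a suspension is no obstacle, since the proposition allows distinct source and target buildings; the resulting apartment $A_1\supset C$, being mapped isometrically onto $\mathbb S^0 * A_x$ by a map whose differential at $x$ is the identity, automatically satisfies $\Sigma_x A_1=A_x$, which is a one-line argument compared with your two-step one.
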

 
\begin{proof}
We apply Proposition \ref{prop: main} to  the canonical surjective $1$-Lipschitz map $\log_x :B\to  S^{0} * \Sigma _x B$,  with $C_1=C$ and $A_2=A_x$.
\end{proof}

Another direct consequence is the statement of Theorem \ref{thm-main} for a special class of convex subsets.

\begin{cor} \label{cor: cone}
Let $C$ be a flat, closed, convex subset in a Euclidean building $X$. Assume that $C$ is a Euclidean cone with its tip at the point $x$.  Let $A_x$ be an apartment in the tangent cone $T_xX$, which contains $T_xC$.  Then $C$ is contained in an apartment $A$, such that $T_xA=A_x$.
\end{cor}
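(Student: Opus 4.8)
The plan is to reduce the statement to the spherical Proposition \ref{prop: main} applied at infinity, and then to reconstruct the desired apartment as a cone of geodesic rays issuing from $x$. Write $A_x=\operatorname{Cone}(a_x)$, where $a_x\subset \Sigma _xX$ is the apartment in the spherical building $\Sigma _xX$ with $T_xX=\operatorname{Cone}(\Sigma _xX)$; then the inclusion $T_xC\subset A_x$ is equivalent to $\Sigma _xC\subset a_x$. Since $C$ is a Euclidean cone with tip $x$, it is the union of the geodesic rays issuing from $x$ towards the points of its boundary at infinity, and flatness of $C$ guarantees that $\log _x$ restricts to an isometry of $\partial _\infty C$ onto $\Sigma _xC$: for two rays in the flat cone $C$ the angle at the tip $x$ coincides with the Tits angle of their ideal endpoints. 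Moreover $\partial _\infty C$ is closed, convex (because $C$ is convex) and spherical (because it is isometric to $\Sigma _xC$).

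First I would apply Proposition \ref{prop: main} to the surjective $1$-Lipschitz map $\log _x\colon \partial _\infty X\to \Sigma _xX$ between the two $(n-1)$-dimensional spherical buildings, taking $C_1=\partial _\infty C$, $C_2=\Sigma _xC$ and $A_2=a_x$. This produces an apartment $a_\infty\subset \partial _\infty X$ containing $\partial _\infty C$ on which $\log _x$ restricts to an isometry onto $a_x$.

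Next I would lift $a_\infty$ to an apartment through $x$ by setting $A:=\bigcup _{\xi \in a_\infty}[x,\xi)$, the union of the rays from $x$ to the ideal points of $a_\infty$. The key point is that, because $\log _x$ restricts to an isometry from $a_\infty$ onto $a_x$, for every pair $\xi ,\eta \in a_\infty$ the angle $\angle _x$ between $[x,\xi)$ and $[x,\eta)$ equals their Tits angle; by the flat sector property of CAT(0) spaces these rays then span a flat convex sector. Assembling these sectors over the whole round sphere $a_\infty\cong \mathbb S^{n-1}$ should show that the radial map $\operatorname{Cone}(a_x)\to A$ is an isometry, so that $A$ is isometric to $\R^n$ and hence an apartment. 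By construction $x\in A$ and $\Sigma _xA=\log _x(a_\infty)=a_x$, whence $T_xA=\operatorname{Cone}(a_x)=A_x$. Finally $\partial _\infty C\subset a_\infty$, together with the fact that $C$ is the cone over $\partial _\infty C$ with tip $x$, yields $C\subset A$ (the degenerate cases $C=\{x\}$ or $C$ a single ray being covered trivially).

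The step I expect to be the main obstacle is this lifting, namely verifying that $A$ is genuinely flat rather than merely a union of rays with a common tip. The tree example shows that an apartment at infinity need not bound any flat through a prescribed point, so it is essential to exploit that $\log _x$ is an \emph{isometry} on $a_\infty$, equivalently that all angles at $x$ equal the corresponding Tits angles, in order to glue the rays into a single Euclidean space of the correct dimension.
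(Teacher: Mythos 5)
Your proof is correct and is essentially the paper's own argument: the paper likewise applies Proposition \ref{prop: main} to $\log_x\colon\partial_\infty X\to\Sigma_x X$ with $C_1=\partial_\infty C$ and $A_2$ the unit sphere of $A_x$, obtaining an apartment $A_1\subset\partial_\infty X$ containing $\partial_\infty C$ on which $\log_x$ is an isometry onto $A_2$. The only difference is the lifting step, where the paper simply cites \cite[Lemma 2.3.4]{KleinerLeeb1} (an apartment at infinity on which $\log_x$ restricts to an isometric embedding bounds a flat through $x$) instead of re-deriving it; your derivation of that fact via the equality of angles at $x$ with Tits angles and the flat sector rigidity is sound, and it has the small added benefit of making the conclusion $T_xA=A_x$ explicit, which the paper leaves implicit.
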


\begin{proof}
We apply Proposition \ref{prop: main} to the logarithmic map $\log_x: \partial _{\infty} X\to \Sigma _xX$  with $C_1=\partial _{\infty} C$ and $A_2$ being the unit sphere in $A_x$.  Hence, we find  an apartment $A_1$ in $\partial _{\infty} X$, which contains $C_1$ and  is mapped isometrically by $\log _x$ onto the unit sphere $A_2\subset A_x$.

This apartment  $A_1$ is the boundary at infinity of an apartment $A$ in $X$ which contains $x$  \cite[Lemma 2.3.4]{KleinerLeeb1}. Since $\partial_{\infty} A_1$ contains $C_1$, we deduce that $A$ contains $C$.
\end{proof}
	
As a special case, Corollary \ref{cor: cone} covers the case of convex subsets $F$ isometric to $\R^k$.  In this case, $x$ can be chosen arbitrarily  and the union of all apartments $A_x\subset T_xX$ containing $T_xF$ is exactly the parallel set $P(T_xF)$ of $T_xF$ in $T_xX$
(see \cite[Section 4.8]{KleinerLeeb1}).  Moreover, the union of all apartments $A$ containing $F$ is the parallel set $P(F)$ of $F$.  Thus, Corollary \ref{cor: cone} implies the following result (cf. \cite{BM}):

\begin{cor}
Let $F\subset X$ be a convex subset isometric to a Euclidean space in a Euclidean building $X$. Then for every $x\in F$ the tangent space $T_x(P(F))$ at $x$ to the parallel set $P(F)$ of $F$ coincides with the parallel set $P(T_xF)$ of the flat $T_xF$ in the building $T_xX$. 
\end{cor}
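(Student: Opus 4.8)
The plan is to prove the two inclusions $P(T_xF)\subseteq T_x(P(F))$ and $T_x(P(F))\subseteq P(T_xF)$ separately, using the two descriptions of parallel sets as unions of apartments recorded just above the statement, together with the correspondence between apartments through $x$ in $X$ and apartments in the tangent building $T_xX$ supplied by \cref{cor: cone}. Throughout I use that $P(F)$ is closed and convex, and that $P(T_xF)$, being the union of the apartments $A_x\supset T_xF$ — each a linear flat through the cone point of $T_xX$ — is a closed cone in $T_xX$.

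For the inclusion $P(T_xF)\subseteq T_x(P(F))$ I would take an arbitrary apartment $A_x\subset T_xX$ with $T_xF\subset A_x$. Since $F$ is a flat, hence a Euclidean cone with tip at the arbitrary point $x\in F$, \cref{cor: cone} (applied with $C=F$) produces an apartment $A\subset X$ with $F\subset A$ and $T_xA=A_x$. As $A$ is an apartment containing $F$, it is one of the apartments whose union is $P(F)$, so $A\subseteq P(F)$ with $x\in A$, and therefore $A_x=T_xA\subseteq T_x(P(F))$. Since $A_x$ was arbitrary and $P(T_xF)$ is the union of all such $A_x$, this yields the first inclusion.

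For the reverse inclusion I would describe $T_x(P(F))$ through the logarithmic map. Every $y\in P(F)$ lies in some apartment $A'\supset F$; as $x\in F\subset A'$ as well, the geodesic $[x,y]$ lies in the flat $A'$, so $\text{Log}_x(y)\in T_xA'$. Now $T_xA'$ is an apartment of $T_xX$ containing $T_xF$, hence $T_xA'\subseteq P(T_xF)$, and in particular $\text{Log}_x(y)\in P(T_xF)$. Thus $\text{Log}_x(P(F))\subseteq P(T_xF)$, and since $T_x(P(F))$ is the closed cone generated by $\text{Log}_x(P(F))$ while $P(T_xF)$ is already a closed cone, passing to the cone and the closure gives $T_x(P(F))\subseteq P(T_xF)$. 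Combining the two inclusions finishes the proof. The only step requiring care is this last bookkeeping: one must identify the tangent cone of the convex set $P(F)$ with the closed cone spanned by the initial directions $\text{Log}_x(y)$, $y\in P(F)$, and then invoke both the cone property and the closedness of $P(T_xF)$ to absorb the passage to the closure; every other step is a direct application of \cref{cor: cone} and of the two already-recorded identifications of $P(F)$ and $P(T_xF)$ as unions of apartments.
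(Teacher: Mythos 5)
Your proof is correct and takes essentially the same route as the paper: the paper derives this corollary directly from Corollary~\ref{cor: cone} together with the two identifications, recorded just before the statement, of $P(F)$ and $P(T_xF)$ as the unions of all apartments containing $F$ and $T_xF$ respectively, which is precisely your two-inclusion argument made explicit. The only point the paper leaves implicit --- the closure bookkeeping in the inclusion $T_x(P(F))\subseteq P(T_xF)$, using that the parallel set $P(T_xF)$ is a closed cone --- is handled correctly in your write-up.
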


 \section{Main Theorem}

	Before embarking on the proof of the main theorem, we recall that a \emph{convex polyhedron} is an intersection of finitely many half-spaces in a Euclidean space.  Any $k$-dimensional convex polyhedron $\mathcal P$, is uniquely (up to isometries of the ambient Euclidean space) embedded into a $k$-dimensional Euclidean space $V$, its \emph{affine hull}.  
	 By \emph{facets} we denote faces of $\mathcal P$ of codimension $1$. Each facet $\mathcal F\subset \mathcal P\subset V$ defines a unique  halfspace $H_\mathcal F$ of $V$ containing $\mathcal P$, such that $\mathcal F\subset \partial H_{\mathcal F}$.

	A polyhedron $\mathcal P$ has no facets if and only if $\mathcal P$ is itself a Euclidean space $\mathcal P=V$.   If $\mathcal P$ has $l\geq 1$ facets and if $\mathcal F$ is a facet of $\mathcal P$ we consider the  polyhedron $\mathcal P_\mathcal F$  (\emph{extension of the polyhedron $\mathcal P$  beyond its facet $\mathcal F$}) given by the intersection of $l-1$ halfspaces 
	$$\mathcal P_\mathcal F:=\bigcap_{\mathcal G} H_\mathcal G\,,$$
	where $\mathcal G$ runs over facets of $\mathcal P$ different from $\mathcal F$.  By construction, $\mathcal P_\mathcal F$  contains $\mathcal P$ and has $l-1$ facets.

	A subset of a Euclidean  building that is isometric to a convex polyhedron will also be called a \emph{convex polyhedron}.  Such a subset is automatically a convex subset of the Euclidean building.

We will use the following observation in CAT(0) geometry (only in the case of Euclidean buildings):

\begin{lem} \label{lem: final}
	Let $X$ be a CAT(0) space.  Let $Y_1,Y_2 $ be closed, convex subsets of $X$ isometric to convex subsets of some Euclidean spaces.  Let $x\in  Y_1\cap Y_2$ be a point and assume that
	\begin{enumerate} 
	\item $T:={\operatorname{Log}}_x (Y_1) \cup {\operatorname{Log}}_x (Y_2) \subset T_xX \,$ is a convex subset of $T_xX$,
	\item ${\operatorname{Log}}_x (Y_1) \cap {\operatorname{Log}}_x (Y_2) ={\operatorname{Log}}_x (Y_1\cap Y_2)\,.$
	\end{enumerate}
	Then $Y:=Y_1\cup Y_2$ is a  convex subset of $X$ isometric to $T$. 
\end{lem}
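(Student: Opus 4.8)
The plan is to build an explicit isometry $\Phi\colon Y\to T$ and then to deduce convexity of $Y$ in $X$ from its existence together with the uniqueness of geodesics in the CAT(0) space $X$. The first ingredient is the observation that, for each $i$, the restriction $\operatorname{Log}_x|_{Y_i}\colon Y_i\to\operatorname{Log}_x(Y_i)$ is an isometry onto its image. Indeed, the tangent cone $T_xY_i$ embeds isometrically into $T_xX$ as a convex subcone; since $Y_i$ is flat and convex, for all $y,z\in Y_i$ the Alexandrov angle $\angle_x(y,z)$ coincides with the comparison angle $\tilde\angle_x(y,z)$, so the law of cosines in the Euclidean cone $T_xX$ yields $d_{T_xX}(\operatorname{Log}_x y,\operatorname{Log}_x z)=d_X(y,z)$. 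Being isometric to the complete convex set $Y_i$, the image $\operatorname{Log}_x(Y_i)$ is moreover closed in $T_xX$.

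Using this I would define $\Phi\colon Y\to T$ by $\Phi|_{Y_i}=\operatorname{Log}_x|_{Y_i}$. This is well defined because both restrictions agree with $\operatorname{Log}_x$ on $Y_1\cap Y_2$, and it is surjective by the definition of $T$. For injectivity, if $\Phi(y)=\Phi(z)$ with $y\in Y_1$ and $z\in Y_2$, their common value lies in $\operatorname{Log}_x(Y_1)\cap\operatorname{Log}_x(Y_2)$, hence by hypothesis (2) equals $\operatorname{Log}_x(w)$ for some $w\in Y_1\cap Y_2$; injectivity of $\operatorname{Log}_x$ on each piece then forces $y=w=z$. The cases with $y,z$ in a common $Y_i$ are immediate from the first ingredient.

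The heart of the argument, and the step I expect to be the main obstacle, is proving that $\Phi$ preserves distances in the mixed case $y\in Y_1$, $z\in Y_2$. Write $u=\operatorname{Log}_x y$ and $v=\operatorname{Log}_x z$. Since $\operatorname{Log}_x$ is $1$-Lipschitz we have $d_{T_xX}(u,v)\le d_X(y,z)$, so only the reverse inequality is at issue. For this I would run the geodesic $\sigma$ from $u$ to $v$ inside the convex set $T$ (hypothesis (1)): its $\sigma$-preimages of the closed sets $\operatorname{Log}_x(Y_1)$ and $\operatorname{Log}_x(Y_2)$ cover a connected interval, hence meet at some parameter, producing a point $p\in\operatorname{Log}_x(Y_1)\cap\operatorname{Log}_x(Y_2)$ lying on $\sigma$, which by hypothesis (2) equals $\operatorname{Log}_x(w)$ for some $w\in Y_1\cap Y_2$. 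As $p$ lies on the geodesic, $d_{T_xX}(u,v)=d_{T_xX}(u,p)+d_{T_xX}(p,v)$, and the first ingredient rewrites the two summands as $d_X(y,w)$ and $d_X(w,z)$; the triangle inequality in $X$ then gives $d_X(y,z)\le d_{T_xX}(u,v)$. Combined with the $1$-Lipschitz estimate this yields equality, so $\Phi$ is an isometry of $Y$ onto $T$.

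Finally, to obtain convexity of $Y$ in $X$, I would take $y,z\in Y$ and pull the $T_xX$-geodesic from $\Phi(y)$ to $\Phi(z)$ — which lies in $T$ by hypothesis (1) — back by $\Phi^{-1}$ to a path $\gamma$ in $Y$. Because $\Phi^{-1}$ preserves distances between points of $T$, the path $\gamma$ is a constant-speed geodesic of $X$ of length $d_X(y,z)$; uniqueness of geodesics in the CAT(0) space $X$ then identifies the image of $\gamma$ with the segment $[y,z]$, which therefore lies in $Y$. This establishes both that $Y$ is convex in $X$ and that it is isometric to $T$.
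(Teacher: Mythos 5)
Your proof is correct and takes essentially the same route as the paper's: both establish that $\operatorname{Log}_x$ restricts to a bijection $Y\to T$ which is an isometry on each piece $Y_i$, and both transfer distances using the fact that the geodesic in the convex set $T$ must pass through a point of $\operatorname{Log}_x(Y_1)\cap\operatorname{Log}_x(Y_2)=\operatorname{Log}_x(Y_1\cap Y_2)$. Your crossing-point/triangle-inequality step and the subsequent pullback of geodesics is a mild repackaging of the paper's single observation that the pulled-back concatenation $I^{-1}(\gamma)$ has length $d(y_1,y_2)$ and is therefore itself a geodesic, which yields the isometry and convexity simultaneously.
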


	\begin{proof}
		By (2), the logarithmic map restricts to a bijective map
	$$I:=\operatorname{Log}_x :Y\to T\,.$$

Since $Y_1$ and $Y_2$ are flat and convex, $I$ restricts to isometries
 $$I: Y_i \to I(Y_i)\subset T_x (Y_i)\subset T\subset T_xX\,.$$

		Choose arbitrary $y_1,y_2\in Y$ and consider the unique geodesic  $\gamma$ between $I(y_1)$ and $I(y_2)$ in $T$, which exists by (1).  Then $\gamma$ is a concatenation of at most one geodesic in $I(Y_1)$ and at most one geodesic in $I(Y_2)$, hence  the curve $I^{-1}(\gamma)$ has the same length as $\gamma$.  Since $I$ is $1$-Lipschitz, 
		 the curve $I^{-1} (\gamma) \subset Y_1\cup Y_2$  has length $d(y_1,y_2)$.

		 Thus $Y_1\cup Y_2$ is convex and $I:Y\to T$ is an isometry.
	\end{proof}

\begin{proof}[Proof of Theorem \ref{thm-main}]

We fix the $n$-dimensional Euclidean building 
and proceed by induction on the dimension $k$ of the convex subset $C$. 

   If $k=0$ then 
$C$ is a single point $x$ and the result is well-known in this case,  \cite[Lemma 4.2.3]{KleinerLeeb1}.

Assume  the statement of Theorem \ref{thm-main}  to be  true for all flat, convex subsets of dimension less than $k$. 

{\bf Step 1.}  We are going to prove that any flat, convex $k$-dimensional polyhedron $\mathcal P \subset X$ is contained in some apartment.  We proceed by induction  on the number of facets of $\mathcal P$.  If $\mathcal P$ has no facets then $\mathcal P$ is isometric to a Euclidean space $\R^k$ and the result is contained in Corollary \ref{cor: cone} or in \cite[Proposition 4.6.1]{KleinerLeeb1}.

Assume that all convex, $k$-dimensional  polyhedra in $X$ with less than $l \geq 1$ facets  are contained in apartments and let a convex polyhedron $\mathcal P$ with $l$ facets  be fixed.    Choose a facet $\mathcal F$ of 
$\mathcal P$.

Choose an arbitrary interior point $x$ of the facet $\mathcal F$.  Then $T_x \mathcal P$ is a $k$-dimensional, flat halfspace  in the tangent space $T_xX$.  By  Corollary \ref{cor: cone}, we find an apartment $A^x$ in the (infinitesimal) Euclidean building $T_xX$ with $T_x \mathcal P \subset A^x$.

We apply the inductive assumption on the $(k-1)$-dimensional, convex polyhedron $\mathcal F$ and find an apartment $A$ of $X$ with $\mathcal F\subset A$ and  $ T_xA =A^x$.

Since $\mathcal P$ is convex and flat, $\operatorname{Log}_x$ restrict to an isometry 
$$\operatorname{Log}_x :\mathcal P \to \operatorname{Log}_x (\mathcal P)=: \mathcal P ' \subset T_xX\,.$$ 
 
 Consider the facet $\mathcal F':= \operatorname{Log}_x (\mathcal F)$ of $\mathcal P'$
 and  the polyhedron $\mathcal P'_{\mathcal F'}$, the   extension of the polyhedron $\mathcal P'$ beyond its face $\mathcal F'$ within the affine hull $V'$ of $\mathcal P' \subset A^x$, as we have defined it at the beginning of this section.  This is a $k$-dimensional convex polyhedron with less than $l$ facets, contained in the Euclidean space $A^x$.
 
 The polyhedron $\mathcal P' _{\mathcal F'}$ is the union of two subpolyhedra:
 $\mathcal P'$ and the closure $\mathcal C'$ of its complement
  $$\mathcal C':=\overline {\mathcal P' _{\mathcal F'} \setminus \mathcal P'}\,,$$
 which intersect in $\mathcal F'$.
 
 We consider now the polyhedron $\mathcal C \subset A$, uniquely defined by the condition $\operatorname{Log}_x (\mathcal C)= \mathcal C'$  (note that 
 $\operatorname{Log}_x :A\to A^x$ is an isometry).   We define $\hat {\mathcal P} $ as
 $$\hat {\mathcal P} = \mathcal P \cup \mathcal C\;.$$
 Then $\hat {\mathcal P} \subset X$ is a union of two flat, convex subsets.  The map $\operatorname{Log}_x$ sends $\mathcal P$ isometrically to $\mathcal P'$ and $\mathcal C$ to $\mathcal C'$. We can now apply Lemma \ref{lem: final} to conclude that $\hat {\mathcal P}$ is a convex subset of $X$ isometric to the polyhedron $\mathcal P'_{\mathcal F'}$.  Since this polyhedron has less than $l$ facets, by induction this polyhedron is contained in some apartment of $X$. Since $\mathcal P \subset \hat {\mathcal P}$, this finishes the proof of {\bf Step 1}.

{\bf Step 2.}  We are going to show that any flat, convex, $k$-dimensional  subset $C$ of $X$ is contained in an apartment (without prescribing the infinitesimal behavior at a point).
We choose an arbitrary increasing sequence of convex  polyhedra $C_1\subset C_2 \subset ...$ contained  in $C$,  such that $C$ is contained in the closure of the union $\bigcup C_i$.

Due to Step 1, we find for each $C_i$ an apartment $A_i$ in $X$ containing  $C_i$.  For each $i$, consider the convex subset 
$$\hat C_i:= \bigcap _{j\geq i}  A_j\;.$$
Then $\hat C_i$ is a convex subset of  the apartment $A_i$
and $C_i \subset \hat C_i$.  Moreover,  $(\hat C_i)_i$ constitutes an increasing sequence of flat, convex subsets of $X$.  

Any  $\hat C_i$ is a Weyl polyhedron as an intersection of apartments, \cite[Corollary 4.4.6]{KleinerLeeb1}. Thus,  it is a polyhedron and the number of facets of $\hat C_i$  is bounded only in terms of the Weyl group $W$ of the building $X$. Moreover, $$\hat C:= \overline{\bigcup _{i} \hat C_i}$$ is complete and thus its image by any isometry is complete and hence closed (here is the only place where we use the completeness of $X$ in an essential way!). Therefore $\hat{C}$ is a convex polyhedron.   Thus,  applying Step 1 again, we deduce that 
$\hat C$ is contained in some apartment of $X$. Since $C\subset \hat C$, this finishes the proof of Step 2.

{\bf Step 3.}   We are now going to prove the remaining "Moreover part" of Theorem \ref{thm-main} for  $k=\dim (C)$.  Note, that we have already used this "Moreover part"  for smaller dimensions in Step 1 above.

 Let   $C\subset X$ be a closed, convex, flat $k$-dimensional subset of $X$, let $x\in C$ be arbitrary and let $A_x$ be an apartment in $T_xX$ containing the convex subset $T_xC$.   

By Step 2, we find some apartment $A'$ in $X$ containing $C$.  Consider
the cone $C' \subset A'$  with tip at $x$ and  $T_xC =T_xC'$.   Then $C\subset C'$.  We  
 apply  Corollary \ref{cor: cone} to the cone $C'$  and find an apartment $A$ in $X$  with $C'\subset A$ and $T_xA= A_x$.   Since $C\subset C'$, we have $C\subset A$.

This finishes the proof of the induction step and  of the theorem.
\end{proof}

\bibliographystyle{alpha}
\bibliography{Buildings}

\end{document}